\documentclass[11pt, oneside]{amsart} 
\usepackage{times}
\usepackage[portrait,margin=1in]{geometry} 
\allowdisplaybreaks
\include{style}
\begin{document}

\title[Applications of the cluster graphing]{Applications of the cluster graphing}
\author[Chu]{Tasmin Chu}
\address{
Tasmin Chu\\
Division of Physics, Mathematics and Astronomy \\
California Institute of Technology \\
Pasadena, CA \\
USA
}
\email{tlchu@caltech.edu}

\begin{abstract}
    In 1999, two papers of Benjamini, Lyons, Peres, and Schramm (\cite{BLPS99inv}, \cite{BLPSpc}) showed that for Bernoulli percolation on unimodular nonamenable quasi-transitive graphs, there are no infinite clusters at criticality.
    Using the theory of countable Borel equivalence relations and Gaboriau's cluster graphing construction, we give a short proof of this result.
    We also point out other uses of the cluster graphing construction in the literature, for instance in showing nonuniqueness at $p_u$ in \cite{GABORIAU20161114}.
    The purpose of this short note is to make folklore proofs known to experts more accessible to the wider community of probabilists and measured group theorists.
\end{abstract}

\maketitle

\section{Applications of the cluster graphing}

\subsection{No percolation at criticality on unimodular nonamenable graphs}

The following theorem first appeared in the literature in \cite{BLPS99inv}, with a simpler proof appearing in \cite{BLPSpc}. 
We give a short proof using the cluster graphing construction of \cite{Gaboriau05}, which is described in the \nameref{sec: Appendix} for the convenience of the reader. Briefly: given an invariant percolation $\mathbf{P}$ on a transitive graph $G$, one can construct a corresponding \textit{cluster graphing}, meaning a Borel graph $\mathcal{G}^{\text{cl}}$ on a standard probability space $(Y, \nu)$. Intuitively, the components of $\mathcal{G}^{\text{cl}}$ look like possible percolation configurations, and the measure $\nu$ is essentially the pushforward of $\mathbf{P}$ under a certain canonical map. The connectedness relation $\mathcal{R}^{\text{cl}}$ of $\mathcal{G}^{\text{cl}}$ is an example of a countable Borel equivalence relation; we call $\mathcal{R}^{\text{cl}}$ a \textit{cluster equivalence relation}.
We suggest the reader consult \cite[Section 5.B.1]{CTT22} for a detailed exposition of the cluster graphing; for a description of the cluster graphing in the quasi-transitive setting, see \cite{bell2025heavyrepulsionclustersbernoulli}. The reader who is not familiar with Bernoulli$(p)$ percolation should consult the preliminaries.

The critical observation is that for Bernoulli$(p)$ percolation on a transitive graph, the cluster equivalence relation at criticality $\mathcal{R}_{p_c}$ is always hyperfinite, meaning it can be written as an increasing union of finite Borel equivalence relations. 
This fact was previously observed by Damien Gaboriau (it is attributed to him in a paper of Lyons, see \cite[page 3]{LYONS_2013}).

\begin{thm}\label{NoInfiniteClustersAtCriticality}
    If $G$ is a nonamenable unimodular quasi-transitive graph, then there are no infinite clusters at $p_c(G)$.
\end{thm}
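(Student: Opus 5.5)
We argue by contradiction. Suppose that at $p=p_c(G)$ there are infinite clusters with positive probability. We work with the cluster graphing $\mathcal{G}^{\text{cl}}_{p}$ on $(Y,\nu)$ and its cluster equivalence relation $\mathcal{R}_{p}$. For simplicity take $G$ transitive; the quasi-transitive case is identical using the construction of \cite{bell2025heavyrepulsionclustersbernoulli}. The proof has two ingredients.

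\textbf{Step 1: $\mathcal{R}_{p_c}$ is hyperfinite.} For $p<p_c$ all clusters are finite, so $\mathcal{R}_p$ is a finite Borel equivalence relation. We realize all $p$ at once via the standard monotone coupling: i.i.d.\ uniform labels $U_e\in[0,1]$ on the edges, with an edge $p$-open iff $U_e\le p$. The cluster graphing is built on the space of labelled configurations, so it carries every level simultaneously. Fix $p_n\uparrow p_c$. The relations $\mathcal{R}_{p_n}$ are finite Borel and increasing. Their union is the relation generated by the edges with $U_e<p_c$. Since $\{U_e=p_c\}$ is almost surely empty, this union agrees $\nu$-a.e.\ with $\mathcal{R}_{p_c}$. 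Hence $\mathcal{R}_{p_c}$ is hyperfinite, after discarding a null set.

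\textbf{Step 2: an infinite cluster yields a nonamenable piece.} Unimodularity is used here. It makes $\nu$ an $\mathcal{R}_{p_c}$-invariant measure and lets the mass transport principle be read as invariance. On the positive-measure Borel set $Y_\infty$ where the root's cluster is infinite, the restriction of $\mathcal{R}_{p_c}$ is therefore a measure-preserving countable Borel equivalence relation. By Step 1 it is hyperfinite, hence $\nu$-amenable. By the Connes--Feldman--Weiss / Kaimanovich characterization, an amenable p.m.p.\ graphing has almost every component with zero ``measured'' expansion. Concretely, for every $\varepsilon>0$ there is a Borel family of finite subsets $F$ of clusters with $|\partial_{\mathcal{G}^{\text{cl}}}F|<\varepsilon|F|$ covering most of $Y_\infty$.

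\textbf{Step 3: contradict amenability.} This is the main obstacle. By \cite{BLPSpc}, on a nonamenable unimodular graph invariant percolations with infinite clusters and high expected degree have nonamenable clusters. That result is not available at the critical density, so instead we use insertion tolerance. Bernoulli percolation is insertion tolerant, so adding edges at points of the Følner-type sets costs only absolutely continuous changes of measure. The plan is to compare with $p>p_c$ and show that in a hyperfinite relation the expected degree of the root within infinite clusters must satisfy a bound. The bound should come from a mass-transport computation of the averaged boundary of the finite sets $F$, and should read
\[
\mathbf{E}_{p_c}\bigl[\deg_\omega(o)\,\big|\,|C(o)|=\infty\bigr]\le 2 .
\]
Here $\deg_\omega(o)$ is the open degree of the root $o$ in the configuration $\omega$, and $C(o)$ is its cluster. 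This is the measured analogue of ``amenable unimodular graphs have expected degree at most $2$'' (the BLPS result for hyperfinite relations, \cite{BLPS99inv}).

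The $\le 2$ bound clashes with nonamenability in the following way. Bernoulli clusters are insertion- and deletion-tolerant. If infinite clusters with expected degree at most $2$ existed, they would be one-ended trees or bi-infinite paths with positive probability. For a nonamenable graph, insertion tolerance excludes this: opening one extra edge between two nearby points of the same cluster creates a cycle in a positive fraction of cases, forcing expected degree strictly greater than $2$. This contradiction completes the proof. I expect the delicate part to be making the degree bound rigorous for the restricted relation on $Y_\infty$, in particular handling the conditioning on $Y_\infty$ correctly in the mass transport.
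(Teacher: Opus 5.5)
Your Step 1 matches the paper's observation that $\mathcal{R}_{p_c}$ is hyperfinite, and Step 2 is harmless. Step 3, however, has a genuine gap. The inequality $\mathbf{E}_{p_c}[\deg_\omega(o)\mid |C(o)|=\infty]\le 2$ does not follow from hyperfiniteness, and for general graphings it actually points the wrong way.

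The BLPS bound you are recalling is for invariant \emph{forests}, where finite subtrees have average degree $<2$. For an arbitrary graphing of an aperiodic hyperfinite pmp relation, Levitt's theorem gives cost $1$. So the expected degree is \emph{at least} $2$, with equality for treeings. Two counterexamples:
\begin{itemize}
\item At $p=1$ on $\mathbb{Z}^2$, the cluster relation is $\mathcal{R}^{\text{full}}$. It is hyperfinite, yet every degree is $4$.
\item In the nonamenable unimodular graph $T_3\times\mathbb{Z}^2$, the invariant percolation given by the $\mathbb{Z}^2$-fibres has hyperfinite clusters of degree $4$.
\end{itemize}
A sanity check exposes the same problem. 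Your final contradiction (insertion tolerance creates cycles, hence degree $>2$) never uses nonamenability. If Step 3 were valid, it would rule out infinite clusters at every $p$ on $\mathbb{Z}^d$. This is exactly the paper's remark that hyperfiniteness alone cannot detect anything on amenable graphs. Nonamenability has to be what breaks hyperfiniteness.

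The missing idea is a mechanism by which an infinite cluster on a nonamenable $G$ forces $\mathcal{R}_{p_c}$ to be non-hyperfinite. The relevant obstruction concerns ends or nonamenable subgraphs, not degrees. The paper gives two such mechanisms.

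\textbf{First route.} The perturbation theorem \cite[Theorem 1.1]{BLS-perturb} applies at every $p$ at which $G[p]$ has infinite clusters, including $p_c$. So your worry that this input is ``not available at the critical density'' is unfounded. It gives an invariant random subgraph of $G[p_c]$ whose components have positive Cheeger constant. By Kaimanovich \cite{kaimanovich1997amenability}, this already makes $\mathcal{R}_{p_c}$ non-hyperfinite.

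\textbf{Second route.} Split via Newman--Schulman \cite{newman1981infinite} into the cases $N_\infty=1$ and $N_\infty=\infty$.
\begin{itemize}
\item If there is a unique infinite cluster, it yields a complete section $B$ of $\mathcal{R}^{\text{full}}$ with $\mathcal{R}^{\text{full}}|_B=\mathcal{R}_{p_c}|_B$. This restriction is non-hyperfinite precisely because $G$ is nonamenable.
\item If there are infinitely many infinite clusters, insertion tolerance and mass transport produce trifurcations. Hence the cluster of the origin has infinitely many ends with positive probability. This contradicts the fact that almost every component of a pmp hyperfinite graphing has at most two ends \cite{adams1991amenability}.
\end{itemize}
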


\begin{proof}
    Let $G[p]$ denote the random graph formed by Bernoulli$(p)$ percolation on $G$. Observe that the cluster equivalence relation $\mathcal{R}_{p_c}$ corresponding to $G[p_c]$ is hyperfinite. Indeed, if $p_n$ is a sequence converging to $p_c$ from below, then $\mathcal{R}_{p_c} = \bigcup_n \mathcal{R}_{p_n}$, where $\mathcal{R}_{p_n}$ are (finite) cluster equivalence relations corresponding to $G[p_n]$.
    However, by \cite[Theorem 1.1]{BLS-perturb}, if $G$ is a nonamenable quasi-transitive unimodular graph and $G[p]$ has infinite clusters, then almost surely $G[p]$ contains an invariant random subgraph whose components have positive Cheeger constant. By a theorem of Kaimanovich \cite{kaimanovich1997amenability} (see \cite[Theorem 3.10]{TasminThesis} for an explicit translation to the cluster graphing), such a subgraph implies that $\mathcal{R}_{p}$ is not hyperfinite. Thus, if $G[p]$ contains infinite clusters, the corresponding cluster equivalence relation $\mathcal{R}_p$ is not hyperfinite. We conclude that $G[p_c]$ cannot have infinite clusters, i.e. there is no percolation at criticality. 
\end{proof}

In personal communication, Robin Tucker-Drob pointed out the following modification of the proof above, which bypasses \cite[Theorem 1.1]{BLS-perturb} entirely and only uses elementary considerations about countable Borel equivalence relations.

\begin{proof}
    Let $G[p]$ denote the random graph formed by Bernoulli$(p)$ percolation on $G$, with corresponding law $\pr_p$. As above, the cluster equivalence relation $\mathcal{R}_{p_c}$ corresponding to $G[p_c]$ is hyperfinite. By a theorem of Newman and Schulman \cite{newman1981infinite}, the number $N_\infty$ of infinite clusters at any level $p$ is a $\pr_p$-a.s. constant, taking values in $\{0,1, \infty\}$.
    
    We first rule out a unique infinite cluster at $p_c$. Indeed, the nonamenability of $G$ gives that the ambient equivalence relation $\mathcal{R}^{\text{full}}$ (called the full equivalence relation in \cite{Gaboriau05}) is not hyperfinite (this can be deduced from \cite{kaimanovich1997amenability}). 
    The unique infinite cluster in almost every $\mathcal{R}^{\text{full}}$-class gives a complete section $B$ of $\mathcal{R}^{\text{full}}$, so that $\mathcal{R}^{\text{full}}|_B = \mathcal{R}_{p_c}|_B$. Since $\mathcal{R}^{\text{full}}$ is not hyperfinite, $\mathcal{R}_{p_c}$ is also not hyperfinite by \cite[Proposition 1.3 vi)]{JKLcber}, a contradiction.

    We now rule out the possibility of infinitely many infinite clusters at $p_c$. Indeed, using insertion tolerance, by opening finitely many edges, one can connect the origin to at least three distinct infinite clusters. 
    Thus, with positive probability, the origin is a trifurcation.
    By a mass transport argument, infinitely many such trifurcations exist, and thus the cluster at the origin thus has infinitely many ends with positive probability.
    However, almost every component of a pmp hyperfinite Borel graph has at most two ends (see \cite{adams1991amenability} or \cite[Theorem 2.13]{AnushRobin} or \cite[Corollaire IV.24]{gaboriau2000cout}), so this contradicts that $\mathcal{G}_{p_c}$ is hyperfinite.
\end{proof}

\begin{rmk}
    We point out this proof makes transparent why the assumption of nonamenability of the ambient graph $G$ is necessary for this proof to go through. 
    For Bernoulli percolation on an amenable quasi-transitive graph $G$, it is still true that $G[p_c]$ is hyperfinite; however, so is $G[p]$ for all $p \in [0,1]$, since $\mathcal{R}^{\text{full}}$ is hyperfinite, and subequivalence relations of hyperfinite equivalence relations are hyperfinite \cite[Proposition 1.3i)]{JKLcber}. 
    As a result, any proof strategy which attempts to rule out infinite clusters at criticality using hyperfiniteness or lack thereof is doomed to fail.
    In general, ruling out percolation at criticality for amenable graphs appears to be an extremely subtle question. Finally, we point the reader to \cite[Theorem 8.11]{URG} for a generalization of \cref{NoInfiniteClustersAtCriticality} in the setting of {unimodular random graphs}.
\end{rmk}

We highlight that if $G$ is a \textit{nonunimodular} quasi-transitive graph, then a different proof shows that there are no infinite clusters at $p_c(G)$, by results of Tim\'ar and Hutchcroft. This is \cite[Theorem 1]{hutchcroft2016criticalpercolationquasitransitivegraph}; we reprise his case analysis below.

\begin{thm}\label{NonunimodularCase}
    If $G$ is a nonunimodular quasi-transitive graph, then there are no infinite clusters at $p_c(G)$.
\end{thm}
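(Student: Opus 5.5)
Following Hutchcroft's case analysis, the plan is to split according to the a.s. constant number $N_\infty\in\{0,1,\infty\}$ of infinite clusters at $p_c$ (Newman--Schulman, as used in the preceding proof) and to rule out $N_\infty=1$ and $N_\infty=\infty$ separately. Throughout, fix a closed nonunimodular subgroup $\Gamma\le\mathrm{Aut}(G)$ acting quasi-transitively, with modular function $\Delta$. Choose a one-parameter ``height'' $h(v)=\log\Delta(v)$, well defined up to an additive constant via $\Delta(u,v)=|\mathrm{Stab}_u v|/|\mathrm{Stab}_v u|$. The key tool is the tilted mass-transport principle: for $\Gamma$-diagonally invariant $F\ge 0$,
\[
\sum_v \mathbf{E}F(o,v)=\sum_v \mathbf{E}F(v,o)\,\Delta(o,v),
\]
summed over representatives of the finitely many orbits.

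\textbf{Case $N_\infty=\infty$.} Here I would invoke Tim\'ar's result that in Bernoulli (more generally insertion-tolerant) percolation on a nonunimodular quasi-transitive graph, every infinite cluster with infinitely many infinite clusters present has \emph{no} highest level, i.e.\ $\sup_{v\in C}h(v)=\infty$ fails only for ``light'' clusters. I would also invoke his result that distinct heavy clusters cannot coexist in this way, so that infinitely many infinite clusters are all light. I would then combine this with the fact that, at $p_c$, the clusters must be light, since a heavy cluster gives positive probability of a unique infinite cluster after a small perturbation. More concretely, I would use Hutchcroft's argument: light clusters at level $p$ have $\mathbf{E}_p$ of the number of vertices above any given height finite. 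A continuity/semicontinuity argument then shows that lightness persists for $p'$ slightly larger than $p$, while $p_c<p_u$ holds here. Rather than this, I would try the cleaner route. At $p_c$ one applies the tilted susceptibility
\[
\chi_{p,\lambda}=\mathbf{E}_p\sum_{v\in K_o}\Delta(o,v)^{\lambda}.
\]
Hutchcroft proves $\chi_{p,1/2}<\infty$ for $p<p_c$ and shows that this, via a Russo/differential-inequality argument, forces $\chi_{p_c,1/2}<\infty$. That in turn implies every cluster at $p_c$ is a.s.\ finite, because an infinite cluster must contain either arbitrarily high vertices (light clusters are ruled out by the mass transport principle with tilt $1/2$) or infinitely many vertices of comparable height.

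\textbf{Case $N_\infty=1$.} The unique infinite cluster is heavy, i.e.\ it contains vertices of arbitrarily large height. By the mass transport principle, this forces $p_c\ge p_u$; on the other hand, finiteness of the tilted susceptibility at $p_c$ gives the contradiction as above.

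\textbf{Main obstacle.} The main obstacle is the step showing $\chi_{p_c,1/2}<\infty$. I would first try to prove the differential inequality $\frac{d}{dp}\chi_{p,1/2}\le C\chi_{p,1/2}^{2}$ together with a uniform a priori bound $\chi_{p,1/2}\le C'$ for $p<p_c$. This bound would come from the symmetry $\Delta^{1/2}$ of the tilted MTP, which makes the tilted two-point function submultiplicative and bounded by the operator norm of the tilted adjacency operator. Monotone convergence then passes the bound to $p_c$.
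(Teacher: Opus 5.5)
There is a genuine gap in the step your whole argument funnels through. The implication ``$\chi_{p_c,1/2}<\infty$ $\Rightarrow$ all clusters are finite'' is false as a general principle, so it cannot dispose of the case $N_\infty=\infty$. Your dichotomy (an infinite cluster has arbitrarily high vertices, or infinitely many vertices at comparable heights) omits precisely the light clusters. These are bounded above, have finitely many vertices at each height, and extend infinitely far downward, where the weights $\Delta(o,v)^{1/2}$ decay exponentially.

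Here is a concrete example. Take the $3$-regular tree with $\Gamma$ the stabilizer of an end. With your mass-transport convention, $\Delta(o,v)=2^{h(v)-h(o)}$, where $h$ increases towards the end. Counting paths gives $\chi_{p,1/2}=\sum_v p^{d(o,v)}2^{(h(v)-h(o))/2}<\infty$ for every $p<1/\sqrt2$, although $p_c=1/2$. More generally, Hutchcroft's theorem $p_c<p_{1/2}:=\sup\{p:\chi_{p,1/2}<\infty\}$ \cite{Hutchcroft20} gives, on every nonunimodular graph, a whole interval of $p$ with $\chi_{p,1/2}<\infty$ and infinitely many infinite light clusters.

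Finiteness of $\chi_{p,1/2}$ does rule out heavy clusters, so it would handle $N_\infty=1$. It does not rule out light ones. Excluding infinitely many infinite clusters at $p_c$ needs a different input, which in the paper is Tim\'ar's \cite[Corollary 5.7]{Timar06nonu}.

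Second, you never obtain $\chi_{p_c,1/2}<\infty$ non-circularly. By monotone convergence, a uniform bound for $p<p_c$ is equivalent to $\chi_{p_c,1/2}<\infty$ itself, which is the hard content of \cite{Hutchcroft20}. As the remark after the theorem points out, Hutchcroft's argument uses that there are no infinite clusters at $p_c$. The same applies to the inequality $p_c<p_u$ in your abandoned route. So citing either is circular.

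Your self-contained justification does not work either:
\begin{itemize}
\item The tilted two-point function is supermultiplicative by Harris--FKG, not submultiplicative.
\item Bounding connection probabilities by path sums through a tilted adjacency operator gives finiteness only below the reciprocal of a tilted growth rate. That threshold has no relation to $p_c$.
\item The differential inequality $\frac{d}{dp}\chi_{p,1/2}\le C\chi_{p,1/2}^2$ only propagates finiteness upward from a point where it is already known, so it cannot supply the bound.
\end{itemize}
Consequently your $N_\infty=1$ case yields no contradiction either: ``$p_c\ge p_u$'' is immediate from the definition of $p_u$.

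The paper's proof sidesteps the tilted susceptibility entirely. Tim\'ar excludes $N_\infty=\infty$. Hutchcroft's 2016 theorem \cite{hutchcroft2016criticalpercolationquasitransitivegraph} excludes a unique infinite cluster at $p_c$ on any quasi-transitive graph of exponential growth. Soardi--Woess \cite{Soardi-Woess} gives nonunimodular $\Rightarrow$ nonamenable $\Rightarrow$ exponential growth.
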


\begin{proof}
    By \cite[Corollary 5.7]{Timar06nonu}, for any nonunimodular quasi-transitive graph $G$, $G[p_c]$ does not have infinitely many infinite clusters.
    By \cite{hutchcroft2016criticalpercolationquasitransitivegraph}, for any quasi-transitive graph $G$ of exponential growth, $G[p_c]$ does not have a unique infinite cluster.
    Since any nonunimodular quasi-transitive graph is nonamenable by Soardi and Woess \cite{Soardi-Woess} and thus has exponential growth, we conclude $G[p_c]$ has no infinite clusters.
\end{proof}

\begin{remark}
    In fact, \textit{a posteriori}, later work of Hutchcroft \cite{Hutchcroft20} shows that for any nonunimodular quasi-transitive graph, there exists a nontrivial phase with infinitely many light clusters; in other words, for such graphs, $p_c(G) < p_u(G)$. If one knows that $p_c(G) < p_u(G)$, then the result of \cite{Timar06nonu} is enough to conclude \cref{NonunimodularCase}. However, the proof of \cite{Hutchcroft20} which shows that $p_c(G) < p_u(G)$ itself uses that there are no infinite clusters at $p_c(G)$, i.e. it relies on \cref{NonunimodularCase}.
\end{remark}

\begin{cor}
    If $G$ is any nonamenable quasi-transitive graph, then there are no infinite clusters at $p_c(G)$.
\end{cor}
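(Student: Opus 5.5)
The corollary follows from a case split on whether $G$ is unimodular, with each case handled by one of the two preceding theorems.

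Let $G$ be a nonamenable quasi-transitive graph. Unimodularity is a dichotomy for a quasi-transitive graph. It is defined via the automorphism group $\mathrm{Aut}(G)$ (or any closed subgroup acting quasi-transitively): Haar measure satisfies, or fails, the unimodularity criterion $|\Gamma_x y|/|\Gamma_y x|$ relations along orbits. So exactly one of the following holds.

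\textbf{Case 1: $G$ is unimodular.} Then $G$ is a nonamenable unimodular quasi-transitive graph, and \cref{NoInfiniteClustersAtCriticality} applies directly. It gives that $G[p_c(G)]$ almost surely has no infinite clusters.

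\textbf{Case 2: $G$ is nonunimodular.} Then \cref{NonunimodularCase} applies and gives the same conclusion. In this case the nonamenability hypothesis is not even needed, since by Soardi--Woess every nonunimodular quasi-transitive graph is automatically nonamenable.

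In both cases there is no percolation at $p_c(G)$, which proves the corollary. There is no real obstacle here. The only point to check is a consistency of conventions: unimodularity of $G$ should be taken with respect to the full automorphism group $\mathrm{Aut}(G)$, so that the two theorems together cover every nonamenable quasi-transitive graph. If one instead uses a closed subgroup acting quasi-transitively, it does not matter. Unimodularity of $G$ is a property of $\mathrm{Aut}(G)$, and both theorems were stated for $G$ itself in that sense, so the two cases are exhaustive.
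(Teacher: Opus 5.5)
Your proof is correct and is exactly how the paper obtains the corollary: split on whether $G$ (i.e.\ $\operatorname{Aut}(G)$) is unimodular, then apply \cref{NoInfiniteClustersAtCriticality} in the unimodular case and \cref{NonunimodularCase} in the nonunimodular case. One small caveat on your aside: unimodularity cannot be read off from an arbitrary closed quasi-transitive subgroup, since the stabilizer of an end of a regular tree $T$ is a nonunimodular closed transitive subgroup of the unimodular group $\operatorname{Aut}(T)$; but because you fix the convention via $\operatorname{Aut}(G)$, the case split is exhaustive and the argument is unaffected.
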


We now make some meta-mathematical observations about the two proofs of \cref{NoInfiniteClustersAtCriticality} above.
After unpacking the classical theory of countable Borel equivalence relations above, the reader may notice that the proofs above are not so different from those which appear in \cite{BLPSpc}, \cite{BLPS99inv}.
For instance, in \cite{BLPSpc}, the authors exactly rule out the case of infinitely many infinite clusters by showing that this implies the cluster at the origin has at least three ends with positive probability; they then construct an invariant spanning forest on trifurcations by hand, observe it has infinitely many ends, and use mass transport to obtain a contradiction. 

There are genuine conceptual advantages to using the language of countable Borel equivalence relations: for instance, it allows one to take advantage of the existing literature of measured group theory and to understand heredity properties of invariants like hyperfiniteness, measured Property (T), and treeability.
It is also useful for extending classical proof methods like the mass-transport technique to invariant percolations on nonunimodular graphs, as one can use the theory of \textit{measure-class-preserving} (rather than \textit{probability-measure-preserving}) equivalence relations; see \cite{wamen}, \cite{bell2025heavyrepulsionclustersbernoulli}, \cite{CTT22}, \cite{AnushRobin} for examples.
However, one could argue that the measured-group-theoretic proofs are not ultimately ``shorter", in that the technical content is ultimately still hidden somewhere (for instance, in \cite{JKLcber}).

\subsection{No unique infinite cluster at $p_u$ via nonapproximability}

A well-known conjecture of Benjamini posits that for any quasi-transitive graph $G$ with $p_c(G) < 1$, there is no percolation at $p_c(G)$. 
What happens at $p_u(G)$? 
For amenable graphs, $p_u(G) = p_c(G)$, so this case falls under the conjecture above, but for nonamenable graphs, it is widely believed that $p_c(G)<p_u(G)$ (see \cite[Conjecture 6]{BSbeyond}).
There are examples of nonamenable graphs with a unique infinite cluster at $p_u$ and examples with infinitely many infinite clusters at $p_u$; see the introduction of \cite{HutchcroftPan24rel} for a survey of known results.
The question of uniqueness at $p_u$ appears to be extremely subtle. It is not known, for instance, whether the number of infinite clusters at $p_u(G)$ in a Cayley graph of a fixed group $\Gamma$ depends on the choice of generating set. There is not even a conjectural characterization of the nonamenable groups which have uniqueness at $p_u$, although in \cite{HutchcroftPan24rel}, Pan and Hutchcroft point out that in all known examples, uniqueness at $p_u$ coincides with positivity of the first $L^2$-Betti number.

The following result first appears in the literature in \cite{LyonsSchramm99}, using a modification suggested by Yuval Peres of a weaker argument. 

\begin{thm}\cite[Corollary 6.6]{LyonsSchramm99}\label{NonuniquenessForKazhdanGroups}
    If $G$ is a Cayley graph of an infinite Property (T) group, then there is not a unique infinite cluster at $p_u(G)$.
\end{thm}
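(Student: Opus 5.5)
We argue by contradiction, in parallel with the proof of \cref{NoInfiniteClustersAtCriticality}, but replacing hyperfiniteness by Property (T) and nonapproximability. Suppose $G[p_u]$ has a unique infinite cluster. Let $\mathcal{R}_p$ be the cluster equivalence relation of $G[p]$ and $\mathcal{R}^{\text{full}}$ the full equivalence relation. Since $\Gamma$ acts freely and pmp on the percolation space, $\mathcal{R}^{\text{full}}$ is the orbit equivalence relation of a free pmp action of $\Gamma$. Hence $\mathcal{R}^{\text{full}}$ has Property (T) in the sense of Zimmer/Moore for equivalence relations.

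\textbf{Step 1: pass to a section.} Let $B$ be the set of points whose cluster is the unique infinite cluster. Under the uniqueness hypothesis, $B$ is a complete section of $\mathcal{R}^{\text{full}}$ with $\mathcal{R}^{\text{full}}|_B = \mathcal{R}_{p_u}|_B$, exactly as in the second proof of \cref{NoInfiniteClustersAtCriticality}. Property (T) passes to restrictions to complete sections (it is a stable orbit-equivalence invariant). Therefore $\mathcal{R}_{p_u}|_B$, and hence the ergodic relation $\mathcal{R}_{p_u}$ restricted to its infinite classes, has Property (T).

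\textbf{Step 2: approximate from below.} By monotonicity and the definition of $p_u$, for $p_n \uparrow p_u$ we have $p_n < p_u$, so $G[p_n]$ has $0$ or infinitely many infinite clusters. Under the standard monotone coupling, $\mathcal{R}_{p_u} = \bigcup_n \mathcal{R}_{p_n}$ is an increasing union of subrelations. Property (T) for an ergodic pmp relation that is an increasing union of subrelations $\mathcal{R}_n$ forces some $\mathcal{R}_n$ to have an invariant piece of positive measure on which it agrees with the whole relation, up to finite index, on a set of positive measure. This follows the standard argument: the sum of Koopman representations of the quasi-regular representations $L^2(\mathcal{R}/\mathcal{R}_n)$ has almost invariant vectors, so by (T) it has an invariant vector. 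Concretely, this says that for some $n$, with positive probability, the cluster of the origin in $G[p_u]$ contains only finitely many $G[p_n]$-clusters.

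\textbf{Step 3: contradiction.} This is the step I expect to be the main obstacle. Finitely many $p_n$-clusters inside the unique infinite $p_u$-cluster, with positive probability, must give a unique infinite $p_n$-cluster. This contradicts $p_n < p_u$, or more precisely contradicts the fact that below $p_u$ the unique cluster is absent. The gap to close: the $p_u$-cluster is infinite but made of finitely many $p_n$-clusters, so at least one $p_n$-cluster is infinite. Using the invariant vector from Step 2, one should get that the number of $p_n$-clusters meeting the infinite $p_u$-cluster is a.s.\ finite by ergodicity. Insertion tolerance then lets us glue these finitely many infinite $p_n$-clusters, yielding uniqueness at level $p_n$ (alternatively, quote Häggström–Peres–Schonmann uniqueness monotonicity to rule this out). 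This step is exactly where the Lyons–Schramm/Peres argument enters. If the finite-index formulation from (T) proves awkward, I would instead invoke the nonapproximability statement directly: an ergodic (T) relation is not an increasing union of subrelations with all classes of $\mathcal{R}_{p_u}$ split into infinitely many subclasses.
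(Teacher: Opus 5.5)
Steps 1--2 are sound in outline: they re-derive Pichot's nonapproximability, which the paper simply quotes. The gap is in how you read the output of Step 2 and in how you try to get a contradiction from it.

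\textbf{What (T) actually gives.} Your ``concrete'' reading of Step 2 is false. The infinite $p_u$-cluster almost surely contains infinitely many $G[p_n]$-clusters: for instance, vertices of it all of whose edges are closed at level $p_n$ have positive density. So no argument can make the number of $p_n$-clusters meeting it finite. Your fallback statement is also false. For any ergodic (T) relation $\mathcal{R}$ on $(X,\mu)$, take $\mathcal{S}_n=\mathcal{R}|_{A_n}\cup\Delta_X$ with $A_n\uparrow X$ and $\mu(A_n)<1$. This is an increasing exhausting sequence of subrelations in which every class splits into infinitely many subclasses. An invariant vector in $\ell^2([x]_{\mathcal{R}_{p_u}}/\mathcal{R}_{p_n})$ is instead a function $C\mapsto\eta(C)$ on $p_n$-clusters, square-summable over each $p_u$-class. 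The clusters maximizing $|\eta|$ form an equivariantly selected finite nonempty family, and by mass transport these clusters are infinite. Since $\bigcup_m\mathcal{R}_{p_m}=\mathcal{R}_{p_u}$, they fall into a single $\mathcal{R}_{p_m}$-class for some $m$ on a non-null set. So what (T) delivers is this: for some $p_m<p_u$ there is a non-null $A$ with $\mathcal{R}_{p_m}|_A=\mathcal{R}_{p_u}|_A$, i.e.\ an equivariantly distinguished infinite $p_m$-cluster.

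\textbf{The missing percolation input.} You must then show this is impossible for $p<p_u$, and neither of your tools does it. Below $p_u$ there are either no infinite clusters, in which case there is nothing to distinguish, or infinitely many. In the latter case the Newman--Schulman gluing argument does not apply. That argument excludes a finite \emph{total} number $\geq 2$ of infinite clusters; it says nothing about a finite distinguished subfamily. Opening finitely many edges cannot create uniqueness while infinitely many other infinite clusters remain. H\"aggstr\"om--Peres--Schonmann monotonicity only propagates uniqueness upward in $p$, so it does not help either.

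The missing ingredient is Lyons--Schramm indistinguishability of the infinite $p$-clusters, for properties measurable with respect to the whole coupling. In measured form it says that $\mathcal{R}_p$ restricted to the union of its infinite classes is ergodic. The $\mathcal{R}_p$-saturation of $A$ is a non-null $\mathcal{R}_p$-invariant subset of that locus, hence conull in it. So the distinguished cluster is the only infinite $p$-cluster, contradicting $p<p_u$.

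Equivalently, closeness of your vectors $\xi_n$ to an invariant vector gives $\inf_{g}\mathbf{P}[o\leftrightarrow g\text{ in }G[p_n]]>0$. This contradicts Lyons--Schramm connectivity decay below $p_u$, which is itself a consequence of indistinguishability; this is the original Lyons--Schramm/Peres route. The paper's proof reduces to exactly this nontriviality statement and asserts it in one line from the definition of $p_u$; indistinguishability is the input behind that line.
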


In 1999, Schonmann \cite{schonmann1999percolation} showed that for percolation on $T \times \bZ$ for $d$-regular trees $T$ with $d \geq 3$, there are no infinite clusters at $p_u$. Peres extended this to the case of products $X \times Y$ of quasi-transitive graphs $X,Y$ such that $\operatorname{Aut}(X)$ is nonamenable.

\begin{thm}\cite{Peres00}\label{Peres}
    If $X,Y$ are infinite quasi-transitive graphs such that $\operatorname{Aut}(X)$ is nonamenable, then there is not a unique infinite cluster at $p_u(X \times Y)$.
\end{thm}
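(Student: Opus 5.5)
The idea is to mirror the argument used for \cref{NoInfiniteClustersAtCriticality}, but with ``hyperfinite'' replaced by ``approximable'' and with the uniqueness phase taking the role of the finite-cluster phase. Let $\mathcal{R}_p$ denote the cluster equivalence relation of Bernoulli$(p)$ percolation on $G = X\times Y$, and $\mathcal{R}^{\text{full}}$ the full equivalence relation.

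\textbf{Step 1: monotone approximation from above.} Suppose, for contradiction, that $G[p_u]$ has a unique infinite cluster. Couple all levels by the standard monotone coupling. For $p_n \downarrow p_u$ we have $\bigcap_n G[p_n] = G[p_u]$ edgewise. A unique infinite cluster at each $p_n$ contains the infinite cluster at $p_u$, so on that cluster the restrictions $\mathcal{R}_{p_n}|_B$ decrease to $\mathcal{R}_{p_u}|_B$. Here $B$ is the complete section of $\mathcal{R}^{\text{full}}$ given by the vertices of the unique infinite $p_u$-cluster, exactly as in the second proof above.

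Monotonicity alone does not give this. One must also check that two vertices of $B$ connected in every $G[p_n]$ are connected in $G[p_u]$. This holds because at $p_u$ there is uniqueness, so any two vertices of $B$ are already $p_u$-connected. Hence in fact $\mathcal{R}_{p}|_B = \mathcal{R}^{\text{full}}|_B$ for every $p \ge p_u$, and the real content sits below $p_u$.

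\textbf{Step 2: the argument must come from below.} Step 1 therefore forces us to look at $p < p_u$, i.e. at approximation from below. Take $p_n \uparrow p_u$. The union of the $G[p_n]$ is $G[p_u]$ with probability one; this is the same union statement used for $p_c$. Consequently the infinite $p_u$-cluster restricted to $B$ is the increasing union of the subrelations $\mathcal{R}_{p_n}|_B$.

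At each $p_n$ the subrelation has infinitely many infinite clusters (by Newman--Schulman, $p_n \in (p_c,p_u)$), or only finite clusters. The goal is to show that in either case $\mathcal{R}_{p_n}$, restricted to the infinite clusters, has infinite index in $\mathcal{R}^{\text{full}}$ on $B$. More precisely, the aim is to show each $\mathcal{R}_{p_n}|_B$ is a subrelation whose classes meet a full class in infinitely many pieces, while $\mathcal{R}^{\text{full}}|_B = \bigcup_n \mathcal{R}_{p_n}|_B$ is an increasing union.

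\textbf{Step 3: a rigidity property of the product forbids such unions.} In the Property (T) case this is Connes--Weiss/Popa: a strongly ergodic (T) relation cannot be an increasing union of infinite-index subrelations. For $X\times Y$, I would instead use the product structure. The full relation contains commuting subrelations coming from $\operatorname{Aut}(X)$ and $\operatorname{Aut}(Y)$. Nonamenability of $\operatorname{Aut}(X)$ yields, via \cite{kaimanovich1997amenability}, that the $X$-direction relation is non-hyperfinite. Hence it has no asymptotically invariant sequences, i.e. it is strongly ergodic on the $Y$-fibers.

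An increasing exhausting sequence of subrelations, each with infinitely many classes per full class, produces asymptotically invariant sets. One takes unions of classes of $\mathcal{R}_{p_n}$ selected by an invariant random labelling. Because the $Y$-direction commutes with the $X$-direction, these sets can be made invariant in the $Y$ coordinate, which contradicts strong ergodicity. This is the analogue of Peres's argument, in which a nonamenable $X$-direction makes ``infinitely many clusters become one'' incompatible with continuity at $p_u$.

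\textbf{Main obstacle.} I expect Step 3, producing genuinely nontrivial almost-invariant sets from the increasing union, to be the hardest part. There one has to use insertion/deletion tolerance to show that distinct $p_n$-clusters merge only through finitely many edges locally. My first attempt would be to label $p_n$-clusters by i.i.d.\ fair coins, let $A_n$ be the set of points whose cluster has label $1$, and show $\nu(A_n \,\triangle\, \gamma A_n)\to 0$ for each fixed generator $\gamma$, while $\nu(A_n)=1/2$.
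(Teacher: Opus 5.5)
\textbf{Verdict: genuine gap in Step 3.} Your Steps 1--2 match the paper's reduction. If there is uniqueness at $p_u$ and $B$ is the infinite locus, then $\mathcal{R}^{\text{full}}|_B=\mathcal{R}_{p_u}|_B=\bigcup_n\mathcal{R}_{p_n}|_B$ for $p_n\uparrow p_u$. (The property needed of this sequence is nontriviality: $\mathcal{R}_{p_n}|_A\neq\mathcal{R}^{\text{full}}|_A$ for every non-null $A\subseteq B$. This follows from $p_n<p_u$ together with indistinguishability; ``infinitely many pieces per class'' does not give it.)

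The paper finishes with a black box. By Gaboriau--Tucker-Drob, a free pmp action of $H\times K$ in which one factor acts strongly ergodically and the other ergodically is nonapproximable. This applies to the Bernoulli action when one factor is nonamenable, and it passes to $\mathcal{R}^{\text{full}}$ and to $\mathcal{R}^{\text{full}}|_B$. Your Step 3 tries to replace this theorem, and it does not work.

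First, non-hyperfiniteness does not imply strong ergodicity. For example, $F_2\times\mathbb{Z}$ acting by Bernoulli shift times an irrational rotation generates a non-hyperfinite relation with nontrivial almost invariant sets. The input actually needed is spectral gap: Bernoulli shifts of nonamenable groups are strongly ergodic.

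Second, and more seriously, the coin-labelling sets cannot give the contradiction.
\begin{itemize}
\item On all of $Y$ they are not asymptotically invariant: $\nu(A_n\triangle\gamma A_n)=\tfrac12\mathbf{P}_{p_n}(o\not\leftrightarrow\gamma o)\to\tfrac12\mathbf{P}_{p_u}(o\not\leftrightarrow\gamma o)$, which is positive because finite clusters persist at $p_u<1$.
\item On $B$ they are asymptotically invariant, but only as subsets of an extension of $(Y,\nu)$ carrying the labels. Strong ergodicity does not pass to extensions.
\item On $(Y,\nu)$ itself you cannot measurably pick half of the infinite $p_n$-clusters, since $\mathcal{R}_{p_n}$ is ergodic on its infinite locus (indistinguishability).
\end{itemize}
A sanity check confirms this. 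Your construction never uses the product structure, so if it worked it would rule out uniqueness at $p_u$ on every nonamenable Cayley graph, because all Bernoulli shifts of nonamenable groups are strongly ergodic. That contradicts the known uniqueness at $p_u$ on one-ended planar hyperbolic graphs (Benjamini--Schramm). The phrase ``these sets can be made invariant in the $Y$ coordinate'' is exactly where the commuting factor must do the work, and no mechanism is given.

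\textbf{A mechanism that does work} uses sets in the original space built from the commuting factor. Let $(\mathcal{R}_n)$ approximate the orbit relation of $\Gamma=H\times K$ on $(X,\mu)$, with $H$ the factor acting strongly ergodically and $K$ the factor acting ergodically. For $k\in K$ set $D_n^k=\{x: x\,\mathcal{R}_n\,kx\}$.
\begin{enumerate}
\item Since $hk=kh$, one has $\mu(hD_n^k\triangle D_n^k)\le 2\mu\{x:(x,hx)\notin\mathcal{R}_n\}$, uniformly in $k$.
\item Strong ergodicity of $H$ then pins every $\mu(D_n^k)$ near $0$ or $1$.
\item Since $\mu(D_n^k)$ changes only slightly along generators of $K$ and equals $1$ at $k=1$, it stays near $1$ for all $k$.
\item Repeat with the roles of $H$ and $K$ exchanged. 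Ergodicity of $K$ now suffices, because the sets $\{x: x\,\mathcal{R}_n\,hx\}$ are almost invariant uniformly over all of $K$. Together with step~3 this gives $\mu\{x: x\,\mathcal{R}_n\,\gamma x\}$ close to $1$ for every $\gamma\in\Gamma$.
\item A finite-index argument then forces the approximation to be trivial.
\end{enumerate}
Nothing in your sketch plays this role.
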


In a 2016 paper, Gaboriau and Tucker-Drob \cite{GABORIAU20161114} illuminated that both \cref{NonuniquenessForKazhdanGroups} and \cref{Peres} are true for the ``same" reasons. Indeed, they observed a general obstruction to uniqueness at $p_u$ by way of the nonapproximability of the associated cluster graphings.
We remark that a comprehensive theory of obstructions to uniqueness via subgroup relativization was developed by Hutchcroft and Pan in \cite{HutchcroftPan24rel}, who showed that amenable normal (more generally, $wq$-normal) subgroups of exponential growth are also obstructions to uniqueness.

We begin by reprising some definitions from \cite{GABORIAU20161114}. 
Below, we consider countable Borel equivalence relations $\mathcal{R}$ on a standard probability space $(X, \mu)$, where $\mathcal{R}$ is probability-measure-preserving (pmp).

\begin{defn}
    An approximation $(\mathcal{R}_n)_{n \geq 0}$ to a countable Borel equivalence relation $\mathcal{R}$ on $(X,\mu)$ is an increasing exhausting sequence of countable Borel equivalence relations with $\bigcup_{n}\mathcal{R}_{n}= \mathcal{R}$. We say that an approximation is trivial if there exists some $n \in \mathbb{N}$ and some $A \subseteq X$ of $\mu$-positive measure such that $\mathcal{R}_n|_{A}= \mathcal{R}|_A$. 
\end{defn}

\begin{defn}
    We say a countable Borel equivalence relation $\mathcal{R}$ on $(X,\mu)$ is nonapproximable if all approximations to $\mathcal{R}$ are trivial.
\end{defn}

In the usual way, the definitions above extend to actions of a countable group: say that an action $\alpha: \Gamma \curvearrowright (X, \mu)$ is approximable if the resulting orbit equivalence relation $\mathcal{R}_\Gamma$ is approximable.

\begin{prop}
    The following equivalence relations are nonapproximable:
\begin{enumerate}
    \item \cite[Proposition 16]{pichot2007theorie} Every pmp action of a Kazhdan Property (T) group is nonapproximable.
    \item \cite[Theorem 1.1]{GABORIAU20161114} Let $\Gamma$ be a group generated by two infinite commuting finitely generated subgroups $H$ and $K$. For any free pmp action of $\Gamma$ on a standard probability space $(X,\mu)$ such that $H$ acts strongly ergodically and $K$ acts ergodically, the orbit equivalence relation $\mathcal{R}_\Gamma$ is nonapproximable.
    \begin{enumerate}
        \item In particular if $\Gamma = H \times K$ is nonamenable, then one of $H,K$ must be nonamenable; after renaming, assume $K$ is nonamenable. Then the Bernoulli action of $\Gamma$ on $2^\Gamma$ will satisfy the properties above, and thus is a free ergodic pmp nonapproximable action of $\Gamma$.
    \end{enumerate}
\end{enumerate}
\end{prop}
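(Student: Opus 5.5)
Parts (1) and (2) are quoted directly from \cite[Proposition 16]{pichot2007theorie} and \cite[Theorem 1.1]{GABORIAU20161114}, so the plan is to prove only the special case (a): check that the Bernoulli action $\Gamma = H\times K \curvearrowright (2^\Gamma, \mu)$, with $\mu$ the uniform product measure, satisfies every hypothesis of (2), and then apply (2). Two standing assumptions are needed. First, $\Gamma$ is finitely generated, so its direct factors $H$ and $K$, being quotients of $\Gamma$, are finitely generated. Second, both factors are infinite, as (2) requires. The factors visibly commute and generate $\Gamma$.

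\textbf{Step 1: one factor is nonamenable.} Amenability is preserved under direct products, so if $\Gamma$ is nonamenable then at least one of $H, K$ is nonamenable. In (2) it is the subgroup acting \emph{strongly} ergodically that must be nonamenable. So I would rename the factors so that the nonamenable one plays the role of $H$ in (2). The statement as written puts the nonamenable group in the role of $K$, and the proof would note this swap, which is harmless since $H$ and $K$ are symmetric in the setup.

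\textbf{Step 2: freeness.} The Bernoulli shift of an infinite countable group with nontrivial base is essentially free. For $g \neq e$, pick infinitely many $x \in \Gamma$ whose pairs $\{x, g^{-1}x\}$ are pairwise disjoint; this is possible because $\Gamma$ is infinite. A fixed point $\omega$ of $g$ must satisfy $\omega(x) = \omega(g^{-1}x)$ for all of them. These are independent events of probability $1/2$, so $\mathrm{Fix}(g)$ is null. Taking the countable union over $g \neq e$ shows the action is essentially free.

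\textbf{Step 3: restrictions to $H$ and $K$.} Choose a set of representatives $S$ for the cosets $\Gamma/H$. Then $2^\Gamma \cong (2^{S})^{H}$ equivariantly for $H$, so the restriction to $H$ is a Bernoulli shift of $H$ with base space $(2^S, \text{product measure})$. The same holds for $K$.
\begin{itemize}
\item \emph{$K$ is ergodic.} The restriction is a Bernoulli shift of the infinite group $K$, hence mixing, hence ergodic.
\item \emph{$H$ is strongly ergodic.} I expect this to be the only step with real content. I would invoke the theorem of Jones–Schmidt (equivalently Schmidt/Kechris): a Bernoulli shift of a nonamenable group, with an arbitrary standard base, has spectral gap on $L^2_0$, and spectral gap implies strong ergodicity. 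To see the spectral gap, note that the Koopman representation on $L^2_0$ of a Bernoulli shift with free underlying index set is a multiple of the left regular representation of $H$. Since $H$ is nonamenable, this representation has no almost invariant vectors.
\end{itemize}

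With Steps 1–3 in hand, every hypothesis of (2) holds, so (2) yields that $\mathcal{R}_\Gamma$ is nonapproximable. The action is free, ergodic (already from Step 3) and pmp, which is exactly the conclusion of (a).
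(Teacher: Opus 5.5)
Your proposal is correct and takes the same route as the paper, which cites (1) and (2) and simply asserts that the Bernoulli shift of $H \times K$ meets the hypotheses of (2); you fill in the standard verifications: essential freeness, mixing of the restriction to $K$, and strong ergodicity of the restriction to the nonamenable factor via spectral gap of its Koopman representation. You are also right that the nonamenable factor must be the one acting strongly ergodically (an amenable group has no strongly ergodic pmp action on a nonatomic space), so the statement's ``assume $K$ is nonamenable'' should be read with $H$ and $K$ swapped, and your standing assumptions that both factors are infinite and finitely generated are exactly what is needed to invoke (2).
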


Recall the Bernoulli action of $\Gamma$ on $2^\Gamma$ is defined as follows.

\begin{defn}[Bernoulli action]
    Given a group $\Gamma$, we define the Bernoulli action (or shift) $\Gamma \acts 2^\Gamma$ as follows. Equip $X = 2^\Gamma$ with probability measure $\mu$, where $\mu$ is the product measure of Bernoulli$(1/2)$ measures on each marginal. 
    Let $\Gamma$ act on a sequence $(x_{\tau})_{\tau \in \Gamma}$ by $(\gamma \cdot x)_{\tau} = x_{\gamma^{-1} \tau}$. 
\end{defn}

\begin{remark}
    The Bernoulli action of $\Gamma$ is essentially free, meaning after throwing away a null set, we obtain a free action of $\Gamma$ on $(X,\mu)$. It is also ergodic and pmp.
\end{remark}

The following theorem is implicit in \cite{GABORIAU20161114} and used to deduce \cite[Theorem 2.1]{GABORIAU20161114} from \cite[Theorem 1.1]{GABORIAU20161114}; we include a proof for the convenience of the reader, and to highlight the use of the cluster graphing construction.

\begin{thm}
    Let $\Gamma$ be a countable group. If there exists a free ergodic pmp nonapproximable action of $\Gamma$ on a standard probability space $(X,\mu)$, then for any Cayley graph $G$ of $\Gamma$, there is not a unique infinite cluster at $p_u(G)$.
\end{thm}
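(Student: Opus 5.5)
Suppose toward a contradiction that $G[p_u]$ has a unique infinite cluster almost surely, and fix a free ergodic pmp nonapproximable action $\Gamma \curvearrowright (X,\mu)$. The plan is to build a cluster graphing over this action (the Gaboriau construction relative to the given action, as in the \nameref{sec: Appendix}). Take the product space $Y = X \times \{0,1\}^{E(G)}$ with measure $\nu_p = \mu \otimes \mathbf{P}_p$ and the diagonal $\Gamma$-action. The ambient relation $\mathcal{R}^{\text{full}}$ is the orbit equivalence relation of $\Gamma \curvearrowright Y$. For each $p$, the cluster relation $\mathcal{R}_p$ relates $y$ and $\gamma^{-1} y$ whenever, in the configuration coordinate of $y$, the identity is joined to $\gamma$ by an open path.

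To compare different values of $p$, I will use the standard monotone coupling: replace $\{0,1\}^{E(G)}$ by $[0,1]^{E(G)}$ with i.i.d.\ uniform labels, and declare an edge $p$-open when its label is $< p$. This puts all the $\mathcal{R}_p$ on one space $(Y,\nu)$, increasing in $p$.

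We then argue in three steps.

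\begin{enumerate}
\item \emph{Reduce to an equality of relations.} The factor map $Y \to X$ is class-bijective, because $\Gamma$ acts freely on $X$. The projection of $\mathcal{R}^{\text{full}}$ to $X$ is $\mathcal{R}_\Gamma$, and nonapproximability passes to such class-bijective pmp extensions. Alternatively, one can work directly with the unique cluster as in the first proof above: the points of $Y$ whose root lies in the unique infinite $p_u$-cluster form a complete section $B$ with $\mathcal{R}^{\text{full}}|_B = \mathcal{R}_{p_u}|_B$. So nonapproximability of $\mathcal{R}^{\text{full}}|_B$, inherited from $\mathcal{R}_\Gamma$, becomes a statement about the restriction of $\mathcal{R}_{p_u}$ to $B$.
\item \emph{Build an approximation from the subcritical-to-$p_u$ regime.} Take $p_n \uparrow p_u$. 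I claim $\mathcal{R}_{p_u} = \bigcup_n \mathcal{R}_{p_n}$, i.e.\ every open path at level $p_u$ is open at some $p_n$. Any finite path has all labels $< p_u$ almost surely, since labels equal to $p_u$ have probability zero. So on $B$ we get an approximation $(\mathcal{R}_{p_n}|_B)$ of $\mathcal{R}^{\text{full}}|_B$.
\item \emph{Show this approximation is nontrivial.} For $p_n < p_u$ there is no uniqueness, so $G[p_n]$ has $0$ or infinitely many infinite clusters. I need that no positive-measure $A \subseteq B$ satisfies $\mathcal{R}_{p_n}|_A = \mathcal{R}^{\text{full}}|_A$. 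Such an equality would say that, with positive probability, the root's $\Gamma$-orbit points lying in $A$ are all $p_n$-connected. By ergodicity and a mass transport argument, $A$ meets each class in infinitely many points, so these points would all lie in one $p_n$-cluster. That cluster is infinite and contains a positive density of the vertices of $G$.
\end{enumerate}

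I expect step~3 to be the main obstacle. A cluster with positive "frequency" should force uniqueness: by the unimodular theory of Lyons--Schramm, infinitely many infinite clusters cannot contain a single cluster of positive density. When $G[p_n]$ has no infinite clusters, the contradiction is immediate, since the cluster would have to be finite. To settle the positive-density case, I would first try a mass-transport argument: send unit mass from each vertex to the points of $A$ in its cluster. If that fails, I would use that any class of $\mathcal{R}_{p_n}|_A$ equal to a full $\mathcal{R}^{\text{full}}|_A$-class forces a cluster meeting every translate. Insertion tolerance plus ergodicity then forces all infinite clusters to merge, contradicting nonuniqueness at $p_n$. This nontriviality of the approximation contradicts nonapproximability, which completes the proof.
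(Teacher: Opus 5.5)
Your architecture is the paper's: the set $B$ of points in the unique infinite $p_u$-cluster, $\mathcal{R}^{\text{full}}|_B=\mathcal{R}_{p_u}|_B=\bigcup_n\mathcal{R}_{p_n}|_B$, and nontriviality from $p_n<p_u$. But Step~1 rests on a false principle: nonapproximability does \emph{not} pass to class-bijective pmp extensions in general.

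Here is a counterexample. Let $\Gamma=\mathbb{F}_2\times\mathbb{Z}$ act on $X=2^\Gamma$ by the Bernoulli shift, which is nonapproximable by the Gaboriau--Tucker-Drob result quoted in the paper. Let $\Gamma$ act on the circle $Z$ through the $\mathbb{Z}$ factor by an irrational rotation $z\mapsto z+\alpha$. The diagonal action on $X\times Z$ is free, pmp, ergodic (the $\mathbb{Z}$ factor is mixing on $X$) and class-bijective over $X$; call its orbit relation $\widetilde{\mathcal{R}}$. Take finite Borel equivalence relations $\mathcal{S}_n$ increasing to the rotation orbit relation on $Z$, and put $\widetilde{\mathcal{R}}_n=\{((x,z),\gamma(x,z)):\gamma\in\Gamma,\ (z,\gamma z)\in\mathcal{S}_n\}$. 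This is an approximation of $\widetilde{\mathcal{R}}$, and it is nontrivial. Given $A$ of positive measure, let $D=\{z:\mu(A_z)>0\}$. Since $\mathbb{F}_2$ acts ergodically on $X$ and trivially on $Z$, for a.e.\ $(x,z)\in A$ and each $k$ with $z+k\alpha\in D$, some $(h,k)\in\Gamma$ maps $(x,z)$ into $A$. So $\widetilde{\mathcal{R}}_n|_A=\widetilde{\mathcal{R}}|_A$ would force $(z,z+k\alpha)\in\mathcal{S}_n$ for infinitely many $k$ (by recurrence), contradicting finiteness of $\mathcal{S}_n$-classes.

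So nothing in your proposal shows that $\mathcal{R}^{\text{full}}$ on $X\times[0,1]^E$ is nonapproximable. The paper avoids the issue by building the cluster relations over the given action itself, with the configurations $\pi_p$ taken as equivariant factors of $X$. This presupposes that $X$ factors onto the percolation, as $X=[0,1]^E$ does, which is enough for the Property (T) and Gaboriau--Tucker-Drob applications. Then $Y\cong X$ for a Cayley graph and $\mathcal{R}^{\text{full}}$ is literally $E_\Gamma$. The only inheritance needed is to restrictions to positive-measure sets, which follows from ergodicity. If you keep the product, you must check nonapproximability of $\Gamma\curvearrowright X\times[0,1]^E$ directly. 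This works in the cases of interest: all free pmp actions of Property (T) groups are nonapproximable, and strong ergodicity of $H$ and ergodicity of $K$ survive a product with a Bernoulli action. It does not follow from a general extension principle.

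Your appeal to Lyons--Schramm in Step~3 is the right idea, but neither concrete mechanism you propose works. The transport ``unit mass from each vertex to the points of $A$ in its cluster'' is not well defined, since $A$ meets that cluster infinitely often. Insertion tolerance is a statement about absolute continuity under modifying configurations, so it cannot make the clusters of a given configuration merge. What makes the paper's one-line nontriviality claim work is indistinguishability in its measured form: ergodicity of $\mathcal{R}_{p_n}$ restricted to the infinite locus $U_\infty$, the set of points whose $\mathcal{R}_{p_n}$-class is infinite. The $\mathcal{R}_{p_n}$-saturation of $A$ is $\mathcal{R}_{p_n}$-invariant, lies in $U_\infty$ up to a null set, and meets each $\mathcal{R}^{\text{full}}$-class in at most one $\mathcal{R}_{p_n}$-class. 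So if $G[p_n]$ had infinitely many infinite clusters, this saturation would be neither null nor conull in $U_\infty$, a contradiction. The case of no infinite clusters is immediate, as you note.
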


\begin{proof}
    Let $G$ be a Cayley graph of $\Gamma$ and $\alpha: \Gamma \acts X$ a free ergodic pmp nonapproximable action on $(X,\mu)$. Denote by $E_\Gamma$ the resulting orbit equivalence relation on $X$.
    Using this free action, we construct the family of cluster equivalence relations $(\mathcal{R}_p)_{p \in [0,1]}$ on a common probability space $(Y,\nu)$ corresponding to $G[p]$ as at the end of Section 2.C in the \nameref{sec: Appendix}.
    
    Notice that we can identify $\mathcal{R}_1 = \mathcal{R}^{\text{full}}$ with the restriction of $E_\Gamma$ to a complete section $X_o$ of $X \times \{o\}$, see \cite[5.B.1]{CTT22}. 
    Since $E_\Gamma$ is ergodic and nonapproximable, the restriction of this nonapproximable equivalence relation to any positive-measure set is also nonapproximable. Thus, $\mathcal{R}^{\text{full}}$ on $(Y,\nu)$ is also nonapproximable. Moreover, by the same trick, for any positive-measure set $B \subseteq Y$, the restriction $\mathcal{R}^{\text{full}}|_B$ is also nonapproximable.

    Suppose towards contradiction there is a unique infinite cluster at $p_u(G)$. 
    Fix a distinguished vertex $o \in V(G)$, e.g. the identity of $\Gamma$.
    Let $A = \{x \in X: o \leftrightarrow \infty \text{ in } \pi_{p_u}(x)\}$ be the set of configurations in which $o$ connects to that unique infinite cluster at intensity $p_u$. 
    
    Now consider 
    $B = \{[x,u]_{\Gamma} \in (X \times V)/\Gamma: u \leftrightarrow \infty \text{ in } \pi_{p_u}(x)\}$, the pushforward of $A$ under the surjection $x \mapsto [x,o]_{\Gamma}$. Equivalently, $B =\{[x,u]_{\Gamma} \in (X \times V)/\Gamma: \text{the } \mathcal{R}_{p_u}\text{-component of $[x,u]_\Gamma$ is infinite}\}$.
    The fact there is a unique infinite cluster at $p_u(G)$ implies that the induced connectedness relations coincide on $B$, i.e. $\mathcal{R}^{\text{full}}|_{B} = \mathcal{R}_{p_u}|_{B}$. (Indeed, $\mathcal{R}_{p_u} \subseteq \mathcal{R}^\text{full}$ always, and if $[x,u]_\Gamma, [x, v]_{\Gamma} \in B$ with $[x,u]_\Gamma \mathcal{R}^{\text{full}}[x, v]_{\Gamma}$, then $u \leftrightarrow v$ in $\pi_{p_u}(x)$ since they both connect to the same unique infinite cluster, hence $[x,u]_\Gamma \mathcal{R}_{p_u} [x,v]_\Gamma$.)
    
    Now let $(p_n)_{n \geq 0}$ be a sequence converging to $p_u$ from below.
    This approximation must be nontrivial: if $\mathcal{R}_{p_n}$ agrees with $\mathcal{R}^{\text{full}}$ on a positive-measure set for some $p_n < p_u$ then $G[p_n]$ has a unique infinite cluster with positive probability, contradicting the definition of $p_u$.
    However, this contradicts that $\mathcal{R}^{\text{full}}|_B = \mathcal{R}_{p_u}|_B$ is nonapproximable.
\end{proof}

\subsection{Other applications}

We now briefly and non-exhaustively mention other appearances of the cluster graphing construction in the percolation theory literature. Martineau studied ergodicity properties of cluster graphings extensively in \cite{martineau2016ergodicity}. 
Tim\'ar and Terlov \cite{wamen} used cluster graphings to study $\w$-nonamenable graphs and (among other beautiful results) show that there are no heavy clusters at $p_h$ for such graphs, using similar arguments to the proofs of \cref{NoInfiniteClustersAtCriticality} above. 
Chen, Terlov, and Tserunyan use cluster graphings \cite{CTT22} to study a generalization of the Free Minimal Spanning Forest (FMSF) which is introduced in the same paper.
Using the cluster graphing construction, the authors of \cite{bell2025heavyrepulsionclustersbernoulli} prove that for Bernoulli$(p)$ percolation on a nonunimodular quasi-transitive graph, any two heavy clusters neighbour in a $\w$-finite set. 
Finally, we remark that cluster graphings have a continuum analogue in Palm equivalence relations, which are devices for understanding invariant point processes on locally compact second countable unimodular groups and their associated symmetric spaces; see \cite{grabowski2025unimodularrandomgraphsproperty}, \cite{fraczyk2023poisson}, \cite{abert2022point}.

\section{Appendix}\label{sec: Appendix}

\subsection{Notation}

If $G=(V,E)$ is a locally finite graph, we let $G[p]$ denote the random graph formed by independently at random deleting each edge with probability $1-p$ and retaining each edge with probability $p$. 
This is a random variable valued in $2^E$, and we denote its law by $\pr_p
$.

\subsection{Countable Borel equivalence relations}

We briefly recall some standard definitions from the theory of countable Borel equivalence relations (CBERs). We refer the reader to \cite{kechrismiller} for more detail. 

\begin{definition}
    Given a standard Borel space $X$ and an equivalence relation $\mathcal{R} \subseteq X^2$, we say that $\mathcal{R}$ is a {countable Borel equivalence relation} on $X$ if $\mathcal{R}$ is a Borel subset of $X^2$ and each $\mathcal{R}$-equivalence class is countable. If each $\mathcal{R}$-equivalence class is finite, we call $\mathcal{R}$ a finite equivalence relation.
\end{definition}

In particular, the Feldman-Moore theorem says that every countable Borel equivalence relation is the orbit equivalence relation of a Borel action by a countable group.

\begin{thm}[Feldman-Moore]
    Let $\mathcal{R}$ be a countable Borel equivalence relation on a standard Borel space $X$. Then there is a Borel action $\alpha: \Gamma \acts X$ of a countable discrete group $\Gamma$ such that $\mathcal{R} = \mathcal{R}_\Gamma$, where $(x, y) \in \mathcal{R}_\Gamma$ if and only if $x,y$ lie in the same $\Gamma$-orbit. 
\end{thm}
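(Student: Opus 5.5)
The Feldman--Moore theorem is classical. Our plan is to prove it in two stages: first show that $\mathcal{R}$ is a countable union of graphs of Borel functions, then turn those functions into Borel involutions, which will serve as generators of $\Gamma$.

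\textbf{Covering $\mathcal{R}$ by graphs of Borel functions.} Fix a Borel isomorphism of $X$ with a Borel subset of $[0,1]$ and let $\pi_1,\pi_2$ be the coordinate projections. Since $\mathcal{R}$ is Borel and every section $\mathcal{R}_x=\{y:(x,y)\in\mathcal{R}\}$ is countable, the Lusin--Novikov uniformization theorem applies to $\mathcal{R}$, viewed as a Borel subset of $X\times X$ with countable vertical sections. It gives $\mathcal{R}=\bigcup_{n}\operatorname{graph}(f_n)$, where each $f_n$ is a Borel partial function with Borel domain. Since $\mathcal{R}$ is symmetric, the same decomposition also holds with the roles of the coordinates swapped.

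\textbf{Refining to partial Borel injections.} Fix a countable basis $(U_k)$ of the topology on $X$. For each $n$ and each pair of disjoint basic sets $U_k,U_l$, set
\[
P_{n,k,l}=\{(x,f_n(x)) : x\in U_k,\ f_n(x)\in U_l\}.
\]
Using Lusin--Novikov once more, on the inverse relation, we refine further so that each piece is the graph of a Borel bijection $g:A\to B$ between Borel sets with $A\cap B=\emptyset$. Note that every piece of a countable-to-one Borel function is again covered by countably many injective pieces, again by Lusin--Novikov. The diagonal of $\mathcal{R}$ is handled by the identity map.

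\textbf{From partial injections to involutions.} For each such $g:A\to B$, define $\sigma_g$ by $\sigma_g=g$ on $A$, $\sigma_g=g^{-1}$ on $B$, and $\sigma_g=\mathrm{id}$ elsewhere. This is a Borel involution of $X$ whose graph lies in $\mathcal{R}$. Let $\Gamma$ be the group generated by the countably many involutions $\sigma_g$. It is countable, and it acts on $X$ in a Borel way.

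\textbf{Conclusion.} Every generator maps each point to an $\mathcal{R}$-equivalent point, so $\mathcal{R}_\Gamma\subseteq\mathcal{R}$. Conversely, every pair $(x,y)\in\mathcal{R}$ lies on the graph of some $g$, so $y=\sigma_g x$. Hence $\mathcal{R}_\Gamma=\mathcal{R}$.

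\textbf{Main obstacle.} The only real work is the descriptive set-theoretic input, namely Lusin--Novikov uniformization. The refinement into graphs of partial injections with disjoint domain and range is the point that needs care. The basis trick handles the disjointness for all off-diagonal points, because any two distinct points are separated by disjoint basic open sets.
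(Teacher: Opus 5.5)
Your proposal is correct and is the standard proof of Feldman--Moore: Lusin--Novikov covers $\mathcal{R}$ by countably many graphs of partial Borel injections, disjoint basic open sets separate off-diagonal pairs, and each injection extends to a Borel involution. The paper does not prove this theorem; it quotes it as classical background and points to Kechris--Miller, where essentially your argument appears. The one step you leave implicit is that the range $B=g(A)$ and the inverse $g^{-1}: B\to A$ are Borel, which follows from the Lusin--Souslin theorem on injective Borel images, or from Lusin--Novikov applied to the inverse of the graph of $g$, whose sections are singletons.
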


In light of the theorem above, the two definitions below make sense.
\begin{defn}[Probability-measure-preserving]
    A CBER $\mathcal{R} = \mathcal{R}_\Gamma$ on a standard probability space $(X, \mu)$ is probability-measure-preserving (or pmp) if the action of $\Gamma \acts (X,\mu)$ is probability-measure-preserving, meaning $\gamma_\ast \mu = \mu$ for all $\gamma \in \Gamma$. 
\end{defn}

\begin{defn}[Measure-class-preserving]
    A CBER $\mathcal{R} = \mathcal{R}_\Gamma$ on a standard probability space $(X, \mu)$ is measure-class-preserving (or mcp) if the action of $\Gamma \acts (X,\mu)$ is measure-class-preserving, meaning $\gamma_\ast \mu$ is absolutely continuous with respect to $\mu$ and $\mu$ is absolutely continuous with respect to $\gamma_\ast \mu$  for all $\gamma \in \Gamma$. 
\end{defn}

\begin{defn}[Hyperfinite]
    A CBER $\mathcal{R}$ on a standard probability space $(X, \mu)$ is (Borel) hyperfinite if there exists an increasing sequence $(\mathcal{R}_n)_{n \geq 0}$ of finite Borel equivalence relations such that $\mathcal{R} = \bigcup_n \mathcal{R}_n$.
\end{defn}

\begin{definition}
    A CBER $\mathcal{R}$ on a standard probability space $(X,\mu)$ is $\mu$-hyperfinite if there is a $\mu$-conull Borel set $A$ such that $\mathcal{R}|_A$ is hyperfinite.
\end{definition}

Since we work in a measured context, we abuse notation and use the word $\mu$-hyperfinite interchangeably with the word hyperfinite throughout this note.

\begin{definition}
    A Borel graph $\mathcal{G}$ on a standard Borel space $X$ is a graph with vertex set $X$ such that the connectedness relation $\mathcal{R}_{\mathcal{G}}$ is a countable Borel equivalence relation.
\end{definition}

We can extend the definitions of hyperfiniteness, pmp, etc. to Borel graphs in the usual way. 
For example, we say that a Borel graph $\mathcal{G}$ is hyperfinite if its connectedness relation $\mathcal{R}_{\mathcal{G}}$ is hyperfinite; similarly, we say that a Borel graph $\mathcal{G}$ on $(X,\mu)$ is pmp if its connectedness relation $\mathcal{R}_{\mathcal{G}}$ is pmp, and so on.

\begin{definition}
    Let $\mathcal{R}$ be a countable Borel equivalence relation on a standard probability space $(X,\mu)$. A \textit{graphing} of $\mathcal{R}$ is a Borel graph $\mathcal{G}$ whose connectedness relation $\mathcal{R}_{\mathcal{G}}$ is exactly $\mathcal{R}$.
\end{definition}

\subsection{The cluster graphing}

The following construction is due to Gaboriau. We refer the reader to \cite{Gaboriau05} for more details.

Let $G = (V,E)$ be a connected, locally finite, transitive, countable graph, $o \in V(G)$ a distinguished vertex, and $\Gamma = \Aut(G)$. 
For any invariant percolation, that is a $\Gamma$-invariant measure $\vP$ on $2^{E}$, we construct a Borel graph $\mathcal{G}^{\text{cl}}$ as follows.
Fix a free pmp action of $\Gamma$ on a standard probability space $(X, \mu)$, and let $\pi: (X, \mu) \rightarrow (2^{E}, \mathbf{P})$ be a $\Gamma$-equivariant factor map (in particular, this means $\pi_{\ast} \mu = \mathbf{P}$).
Think of $\pi$ as the random variable valued in $2^{E}$ with law $\vP$.

Consider the Cartesian product $X \times V$, and define the {product graph} $\mathcal{G}$ on $X \times V$ as follows: given $x,y \in X$ and $u,v \in V$, let $(x,u)$ and $(y,v)$ be adjacent in $\mathcal{G}$ if and only if $x = y$ and $(u,v) \in E$. 
Notice that $\Gamma$ acts diagonally on $X \times V$ (and thus $\mathcal{G}$).
Moreover, $\mathcal{G}$ is $\Gamma$-invariant: for all $\gamma \in \Gamma$, we have that $((x,u),(y,v)) \in E(\mathcal{G})$ if and only if $((\gamma x, \gamma u), (\gamma y, \gamma v)) \in E(\mathcal{G})$. 
Let $\mathcal{R}_{\mathcal{G}}$ denote the connectedness relation of $\mathcal{G}$.

Consider now the {quotient graph} $\mathcal{G}/\Gamma$ on the vertex set $Y:=(X \times V)/\Gamma$.
Notice that $Y$ is a standard Borel space since it can be naturally identified with a Borel transversal $X_o \subseteq X \times \{o\}$ of the action of $\Gamma_o$ (the stabilizer of $o$) on $X \times \{o\}$ by the Becker-Kechris theorem, see \cite{CTT22}.
To define a measure on $Y$, we define a function $f: X \rightarrow Y$ by $f(x) = [(x,o)]_\Gamma$. 
Then $f$ is surjective, so $\nu = f_\ast \mu$ is a probability measure on $Y$. 

We can describe the quotient graph $\mathcal{G} / \Gamma$ on $Y$ explicitly as follows: two representatives $[(x,u)]_{\Gamma}$ and $[(y,v)]_{\Gamma}$ are adjacent in $\mathcal{G}/\Gamma$ if and only if there exists $\gamma \in \Gamma$ so that $x = \gamma y$ and $(u, \gamma v) \in E(G)$. 
In other words, one can change the representative of $[(y,v)]_{\Gamma}$ to a representative $[(x, \gamma v)]_{\Gamma}$ where the first coordinates agree and the second coordinates form an edge of $E$. 
One can check that each connected component of $\mathcal{G}/ \Gamma$ is isomorphic to $G$. 
The connectedness equivalence relation $\mathcal{R}^{\text{full}} = \mathcal{R}_{\mathcal{G}/\Gamma}$ of $\mathcal{G}/\Gamma$ is called the full equivalence relation by Gaboriau in \cite[Section 1.1]{Gaboriau05} and denoted by $\mathcal{E}_V$ in \cite{CTT22}.

\begin{defn}
    The cluster graphing $\mathcal{G}^{\mathrm{cl}} \subseteq \mathcal{G}/\Gamma$ is the graph on $Y$ defined by placing edges between $[(x,u)]_\Gamma$ and $[(x,v)]_\Gamma$ whenever $(u,v) \in \pi(x)$. 
\end{defn}
We let $\mathcal{R}^{\text{cl}}$ denote the connectedness relation of the Borel graph $\mathcal{G}^{\mathrm{cl}}$. 
We summarize some basic properties of this graphing below.

\begin{prop}
    The following holds:
    \begin{enumerate}
        \item The $\mathcal{G}^{\mathrm{cl}}$-component of $\pi({x}) = [(x,u)]_{\Gamma} \in Y$ is isomorphic to the cluster of $u$ in $\pi(x)$.
        \item 
        If $G$ is unimodular, then $\mathcal{G}^{\text{cl}}$ is pmp by \cite[Theorem 2.1]{Gaboriau05}, \cite[Lemma 5.7]{CTT22}. 
        \begin{enumerate}
            \item More generally, if $G$ is a quasi-transitive graph, the equivalence relation $\mathcal{R}_{\mathcal{G}^{\mathrm{cl}}}$ is measure-class-preserving with respect to $\nu$, and the corresponding Radon-Nikodym cocycle is given by
\begin{equation*}\label{cocycle_cber}
\Tilde{\w}^{[x,u]_\Gamma}([x,v]_\Gamma) = \w_\Gamma^u (v).
    \end{equation*}
    In other words, it is essentially induced by $\w_\Gamma$.
        \end{enumerate}
    \end{enumerate}
\end{prop}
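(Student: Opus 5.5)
The proposition has two parts, and I will treat them separately. Throughout I work with the explicit description of $Y=(X\times V)/\Gamma$ and of $\mathcal{G}^{\mathrm{cl}}$ given above.

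For (1), fix $x\in X$ with trivial stabilizer; since the action is free, this holds for every $x$ after discarding a null set. Consider the map $\phi_x: V\to Y$, $u\mapsto [(x,u)]_\Gamma$. This map is injective: if $[(x,u)]_\Gamma=[(x,v)]_\Gamma$, then $\gamma x=x$ and $\gamma u=v$ for some $\gamma$, so $\gamma=1$ and $u=v$.

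Next I check that every $\mathcal{G}^{\mathrm{cl}}$-edge touching $\phi_x(V)$ is the image of an edge of $\pi(x)$. Suppose $[(x,u)]_\Gamma$ is adjacent to some $[(y,w)]_\Gamma$. By definition there is a representative $[(y',a)]=[(x,u)]$ and $[(y',b)]=[(y,w)]$ with $(a,b)\in\pi(y')$. Write $y'=\gamma x$ and $a=\gamma u$. Using $\Gamma$-equivariance of $\pi$, the pair $(u,\gamma^{-1}b)$ is an edge of $\pi(x)$, and $[(y,w)]=\phi_x(\gamma^{-1}b)$. Conversely, every edge of $\pi(x)$ maps to an edge of $\mathcal{G}^{\mathrm{cl}}$.

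It follows that $\phi_x(V)$ is a union of $\mathcal{G}^{\mathrm{cl}}$-components, and $\phi_x$ is a graph isomorphism from $\pi(x)$ onto $\mathcal{G}^{\mathrm{cl}}|_{\phi_x(V)}$. Restricting to the cluster of $u$ gives (1). The only subtlety is the null set of non-free points, which is harmless in the measured setting.

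For (2), the plan is to verify the mass-transport identity. A CBER on $(Y,\nu)$ is pmp iff for every Borel $F:\mathcal{R}\to[0,\infty]$,
\[
\int_Y\sum_{z\mathcal{R}y}F(y,z)\,d\nu(y)=\int_Y\sum_{z\mathcal{R}y}F(z,y)\,d\nu(y).
\]
Since $\mathcal{R}^{\mathrm{cl}}\subseteq\mathcal{R}^{\mathrm{full}}$, it suffices to prove this identity for $\mathcal{R}^{\mathrm{full}}$; subrelations of pmp relations are pmp.

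Given $F$, define $f(x,u,v)=F([(x,u)],[(x,v)])$, which is diagonally $\Gamma$-invariant. Then $\int\sum_z F(y,z)\,d\nu=\int_X\sum_{v}f(x,o,v)\,d\mu(x)$, using $\nu=f_*\mu$ and the bijection $\phi_x$ from (1). Now set $m(u,v)=\int_X f(x,u,v)\,d\mu$. This $m$ is a diagonally invariant function on $V\times V$, because $\mu$ is $\Gamma$-invariant. The unimodular mass transport principle for transitive unimodular $G$ gives $\sum_v m(o,v)=\sum_v m(v,o)$, which is exactly the desired identity. This is the argument of \cite[Theorem 2.1]{Gaboriau05}.

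The main obstacle is (2a), the quasi-transitive mcp case with the cocycle $\w_\Gamma$. There I would replace $o$ by a finite set of orbit representatives and weight $\nu$ accordingly. I would then run the tilted mass transport principle $\sum_v m(o,v)=\sum_v m(v,o)\,\w^o_\Gamma(v)$. Reading this identity off as a Radon–Nikodym derivative identifies the cocycle as $\w_\Gamma^u(v)$. The bookkeeping of stabilizer measures (Haar measures of $\Gamma_u$) is the delicate part, and I would defer to \cite[Lemma 5.7]{CTT22}.
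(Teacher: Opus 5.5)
Your argument is correct and takes essentially the paper's approach: the paper gives no separate proof, treating (1) as a direct check from freeness and equivariance of $\pi$, and citing \cite[Theorem 2.1]{Gaboriau05} and \cite[Lemma 5.7]{CTT22} for (2), where the same mass-transport computation is carried out (push $F$ to a diagonally invariant $m$ on $V\times V$, then apply the (tilted) mass transport principle). One precision for (2a): in the quasi-transitive case the single-root identity you wrote is not the right one; for orbit representatives $o_i,o_j$ one has $\sum_{v\in\Gamma o_j}m(o_i,v)=\sum_{w\in\Gamma o_i}m(w,o_j)\,\w^{o_j}_\Gamma(w)$ (with $\w^u_\Gamma(v)=|\Gamma_v|/|\Gamma_u|$), so giving each piece $(X\times\Gamma o_i)/\Gamma$ equal $\nu$-mass yields exactly the cocycle $\w^u_\Gamma(v)$, while other weights change it by a coboundary.
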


In the special case where $\vP = \pr_p$, we let $\mathcal{R}_p$ denote the corresponding cluster equivalence relation.
Using the standard Uniform $[0,1]$ coupling of Bernoulli($p$) percolation, we may define a continuum family of countable Borel equivalence relations $(\mathcal{R}_p)_{p \in [0,1]}$ on the same probability space $Y = (X \times V)/\Gamma$ such that whenever $p<q \in [0,1]$, one has $\mathcal{R}_p \subseteq \mathcal{R}_q$. We let $(X,\mu)$ denote the common probability space and $\pi_p: (X, \mu) \rightarrow (2^E, \pr_p)$ denote the $\Gamma$-equivariant factor map which realizes $G[p]$. Notice that the action of $\Gamma$ on the probability space $[0,1]^E$ is already essentially free, as observed in \cite[Sections 8.4-5]{gaboriau2002arbres}. We refer the reader to \cite[Sections 8.4-5]{gaboriau2002arbres} and \cite[1.3.d]{Gaboriau05} for further detail.

\subsection{Acknowledgements}

I thank Damien Gaboriau for numerous helpful comments which greatly improved the stylistic and mathematical content of this note, and for pointing out an error in a previous version. 
I also thank Robin Tucker-Drob for the second proof of \cref{NoInfiniteClustersAtCriticality} and for helping me understand a key step of \cite{GABORIAU20161114}. Finally I acknowledge my advisor Anush Tserunyan (who first introduced me to the cluster graphing) for her constant support and insightful conversations, mathematical and otherwise. 

\bibliographystyle{alpha}
\bibliography{bibliography} 

\end{document}